\theoremstyle{plain}
  \newtheorem{theo}{Theorem}
  \newtheorem{coro}[theo]{Corollary}
  \newtheorem{prop}[theo]{Proposition}
  \newtheorem{lemm}[theo]{Lemma}
  \newtheorem*{quest*}{Question}  
\theoremstyle{remark}
  \newtheorem*{rema*}{Remark}
  \newtheorem{rema}[theo]{Remark}
\newcommand\ie{i.e.\ }
\renewcommand\o{\circ}
\newcommand \al{\alpha}
\newcommand\be{\beta}
\newcommand\ga{\gamma}
\newcommand\de{\delta}
\newcommand\ze{\zeta}
\newcommand\et{\eta}
\newcommand\si{\sigma}
\newcommand\ph{\varphi}
\newcommand\om{\omega}
\newcommand\Ga{\Gamma}
\newcommand\La{\Lambda}
\newcommand\Om{\Omega}
\newcommand\Ad{\text{\rm Ad}}
\newcommand\Aut{\text{\rm Aut}}
\newcommand\Diff{\text{\rm Diff}}
\renewcommand\b{\text{\rm b}}
\newcommand\Ker{\text{\rm Ker}}
\newcommand\vol{\mu}
\newcommand\OGr{\text{\rm Gr}}
\newcommand\Ham{\text{\rm Ham}}
\newcommand\per{\text{\rm per}}
\newcommand\vf{\mathfrak X}
\newcommand\Diffvol{\text{\rm Diff}(M,\vol)}
\newcommand\g{\mathfrak g}
\newcommand\M{\mathcal M}
\newcommand\X{\mathfrak X}
\newcommand\x{\times}
\newcommand\R{\mathbb R}
\newcommand\Z{\mathbb Z}
\begin{document}

\title
{Lichnerowicz cocycles and central Lie group extensions} 

\author{Cornelia Vizman}

\address{Cornelia Vizman, 
         West University of Timi\c soara, Department of Mathematics, 
         Bd. V.Parvan 4, 300223--Timi\c soara, Romania.}

\email{vizman@math.uvt.ro}


\keywords{Lichnerowicz cocycle, central extension, coadjoint orbit}

\subjclass[2000]{58B20}

\begin{abstract}
We present a geometric construction of central extensions of 
covering groups of the group of volume preserving diffeomorphisms, 
integrating central extensions of the Lie algebra of divergence free vector fields
defined by Lichnerowicz cocycles.
Certain covering spaces of non--linear Grassmannians can be realized as prequantizable
coadjoint orbits in these Lie group extensions.
\end{abstract}
\dedicatory
{Dedicated to Professor Dan Papuc 
on the occasion of his 80th anniversary}
\maketitle

\section{Introduction}

There is a geometric construction of central Lie group extensions
as pull-back of the prequantization central extension 
involving the quantomorphism group.
The ingredients are a Lie group $G$ with Lie algebra $\g$, a connected prequantizable symplectic 
manifold $(\mathcal M,\Om)$
and a Hamiltonian action of $G$ on $\mathcal M$.
Let $\mathcal P\to\mathcal M$ be the principal $S^1$--bundle with connection 
1--form $\et$ and curvature $\Om$. We denote by $\Aut (\mathcal P,\et)$ 
the group of quantomorphisms, \ie
the connected component of the group of equivariant connection
preserving diffeomorphisms of $\mathcal P$, and by
$\Ham(\mathcal M,\Om)$ the group of Hamiltonian diffeomorphisms of
$\mathcal M$.
The prequantization central extension \cite{Ko70} \cite{S70} associated to $(\mathcal M,\Om)$
is
\begin{equation}\label{kostant}
1\to S^1\to\Aut(\mathcal P,\eta)\to\Ham(\mathcal M,\Om)\to 1.
\end{equation}
For a finite dimensional manifold $\mathcal M$, this is an extension of Lie groups \cite{RS}.
The pull--back of the prequantization extension \eqref{kostant} to $G$ leads to a 1--dimensional central Lie group extension
of $G$. This is true even if $\mathcal M$ is infinite dimensional \cite{NV03}.
The corresponding Lie algebra extension of $\g$ is defined by the Lie algebra 2--cocycle $(X,Y)\mapsto -\Om(\ze_X,\ze_Y)(x_0)$,
where $x_0\in\M$ and $\ze_X$ is the fundamental vector field on $\M$ for $X\in\g$.

Let $M$ be a compact $m$--dimensional manifold with integral volume form 
$\mu$ and let $\OGr_{m-2}(M)$ denote the non--linear Grassmannian 
consisting of all oriented codimension two submanifolds of $M$.
There is a natural symplectic form on $\OGr_{m-2}(M)$ \cite{I96},
denoted by $\tilde\mu$ since it is obtained from the volume form $\mu$ by the tilda map \cite{HV04}. 
It is the higher dimensional version of the natural symplectic form on the space of knots in 
$\R^3$ \cite{MW83}.

The group of exact
volume preserving diffeomorphisms of $M$ acts in a Hamiltonian way on 
$\OGr_{m-2}(M)$. 
The symplectic manifold $\OGr_{m-2}(M)$ is prequantizable if the volume form $\mu$ is integral.
Then the pull--back of the central extension (\ref{kostant}) associated to a connected component $\mathcal M$ of $\OGr_{m-2}(M)$ with symplectic form $\Om=\tilde\mu$ is a 1--dimensional central Lie group extension of the group of exact 
volume preserving diffeomorphisms \cite{I96} \cite{HV04}. 
For any codimension 2 submanifold $N_0$ of $M$ in $\M$, 
a Lie algebra 2-cocycle describing the corresponding central Lie algebra extension of 
the Lie algebra of exact volume preserving vector fields is
$\om_{N_0}(X,Y)=\int_{N_0}i_Xi_Y\mu$.
This cocycle is cohomologous to the Lichnerowicz cocycle
$\om_{\et}(X,Y)=\int_M\et(X,Y)\mu$,
where $\et$ is a closed 2--form on $M$ such that $[\et]\in H^2(M,\R)$ and $[N_0]\in H_{m-2}(M,\R)$ are Poincar\'e dual.
In particular $[\et]$ is an integral cohomology class.

The same formulas as above define cocycles $\om_\et$ and $\om_{N_0}$
on the Lie algebra of divergence free vector fields,
but the corresponding (isomorphic) central Lie algebra extensions
are in general not integrable to the group of volume preserving diffeomorphisms of $M$.
The integrability to the universal covering group
$\widetilde\Diff(M,\mu)_0$ of its identity component is shown in \cite{N04} by computing
the period group of the cocycle $\om_\et$. A construction of this extension
with the help of the path group of $\Diff(M,\mu)$ is presented in \cite{V08}.

In this article we obtain the central extension of $\widetilde\Diff(M,\mu)_0$ integrating $\om_{N_0}$
by pulling back the prequantization central extension
with the help of the canonical Hamiltonian action of $\widetilde\Diff(M,\mu)$ on the symplectic manifold $\widetilde{\mathcal M}$, the universal cover of $({\mathcal M},\tilde\mu)$.
Further, with the help of the flux homomorphism of $\om_{N_0}$,
we determine the smallest covering group $\overline\Diff(M,\mu)$  of $\Diff(M,\mu)_0$
on which $\om_{N_0}$ is integrable. For infinite dimensional groups,
the flux homomorphism of a Lie algebra 2-cocycle is an obstruction to its integrability \cite{N02}.

Using the momentum map, $\mathcal M$ 
can be realized as a coadjoint orbit of the central Lie group extension of the group of exact volume preserving diffeomorphisms integrating $\om_{N_0}$ \cite{HV04}.
It turns out that the covering space 
$$
\bar{\mathcal M}=\widetilde{\mathcal M}/\Ker(\pi_1({\mathcal M})\to H_{m-1}(M,\R))
$$  
of ${\mathcal M}$  is a prequantizable coadjoint orbit of the central Lie group extensions 
of $\widetilde\Diff(M,\mu)$ and $\overline\Diff(M,\mu)$.

The plan of the present paper is as follows.
In Section 2 we consider 2--cocycles on the Lie algebra of divergence free vector fields,
cohomologous to the Lichnerowicz cocycle.
In Section 3 we present what is known about the integrability of these cocycles. 
With the help of the flux homomorphism 
we determine the minimal covering group of $\Diff(M,\mu)_0$ on which 
these cocycles are integrable. 
In Section 4 we list some properties of non--linear Grassmannians. 
Geometric constructions, via the prequantization extension, 
of the Lie group extensions whose existence was shown in Section 3 
are presented in Section 5,
together with the special coadjoint orbit $\bar{\mathcal M}$.


\section{2--cocycles on the Lie algebra of divergence free 
vector fields}

Let $M$ be a compact $m$--dimensional manifold with volume form $\mu$. 
The infinitesimal flux homomorphism is defined on the Lie algebra 
$\X(M,\mu)$ of divergence free vector fields by:
$$
s_\mu:\X(M,\mu)\to H^{m-1}(M,\R),\quad s_\mu(X)=[i_X\mu].
$$
The Lie algebra of exact divergence free vector fields 
${\X}_{ex}(M,\mu)$ is the kernel of $s_\mu$, hence an ideal of 
$\X(M,\mu)$.

The corresponding Lie groups are the group of volume preserving
diffeomorphisms $\Diff(M,\mu)$ with the subgroup of exact volume 
preserving diffeomorphism $\Diff_{ex}(M,\mu)$. The last one 
is the kernel of the flux homomorphism $S_\mu$, due to Thurston, 
integrating the infinitesimal flux homomorphism
$s_\mu$ \cite{B}. For a Lie group structure on diffeomorphism groups see \cite{KM97}.

Every closed 2--form $\et$ on $M$ defines a Lie algebra 2--cocycle on 
${\X}(M,\mu)$, called Lichnerowicz cocycle: 
$$
\om_\et(X,Y)=\int_M\et(X,Y)\mu.
$$
The cocycle condition is easily verified, taking into account that the vector fields are divergence free:
\begin{equation*}
\sum_{cycl}\om_\et([X,Y],Z)=\sum_{cycl}\int_M\et([X,Y],Z)\mu
=\sum_{cycl}\int_ML_X\et(Y,Z)\mu=0.
\end{equation*}

\begin{rema}
The map $\et\mapsto\om_\et$ induces an isomorphism between $H^2(M,\R)$ and $H_c^2(\X_{ex}(M,\mu))$, the second continuous cohomology space of the Lie algebra of exact divergence free vector fields \cite{R95}.

In the exact sequence of Lie algebras
\begin{equation}\label{sn}
0\to\X_{ex}(M,\mu)\to\X(M,\mu)\stackrel{s_\mu}{\to} H^{m-1}(M,\R)\to 0,
\end{equation}
the ideal $\X_{ex}(M,\mu)$ of 
exact divergence free vector fields
is perfect \cite{Li74}. Therefore the pull--back by $s_\mu$
is an injective homomorphism in continuous Lie algebra cohomology,
$s_\mu^*:H_c^2(H^{m-1}(M,\R))=\La^2H^{m-1}(M,\R)^*\to H^2_c(\X(M,\mu))$.

In conclusion the continuous cohomology space $H_c^2(\X(M,\mu))$
of the Lie algebra of divergence free vector fields is isomorphic to
$H^2(M,\R)\oplus\La^2H^{m-1}(M,\R)^*$. 
\end{rema}

Another Lie algebra 2--cocycle on the Lie algebra of divergence free vector fields 
is defined by
\begin{equation*}
\om_{N_0}(X,Y)=\int_{N_0}i_Xi_Y\mu,
\end{equation*} 
where $N_0$ is an oriented compact codimension 2 submanifold of $M$. 
In the same way, every $(m-2)$--cycle $c$ on $M$ defines a Lie algebra 2--cocycle on $\X(M,\mu)$:
$$
\om_c(X,Y)=\int_c i_Xi_Y\mu.
$$

\begin{prop}
If the cohomology class  of the closed 2--form $\et$ on $M$ is Poincar\'e dual to $[c]\in H_{m-2}(M,\R)$,
then $\om_c$ and the Lichnerowicz cocycle
$\om_\et$ are cohomologous Lie algebra 2-cocycles on the Lie algebra of divergence free vector fields.
\end{prop}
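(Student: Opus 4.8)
The plan is to produce a linear functional $\ell$ on $\X(M,\mu)$ with $\om_c(X,Y)-\om_\et(X,Y)=\ell([X,Y])$ for all divergence free $X,Y$, which is exactly the assertion that the two cocycles differ by the coboundary $\de\ell$. Two elementary identities drive the argument. First, a computation in volume-adapted local coordinates gives the pointwise identity $\et(X,Y)\,\mu=\et\wedge i_Yi_X\mu$, so that $\om_\et(X,Y)=\int_M\et\wedge i_Yi_X\mu$; thus both cocycles arise by pairing the $(m-2)$--form $i_Xi_Y\mu$ either with the cycle $c$ or with the closed form $\et$. Second, since $X,Y$ are divergence free the forms $i_X\mu,i_Y\mu$ are closed, and Cartan's formula gives $d(i_Xi_Y\mu)=L_X(i_Y\mu)=i_{[X,Y]}\mu$; in particular $i_Xi_Y\mu$ is in general \emph{not} closed, its differential being $i_{[X,Y]}\mu$.

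I would then phrase the comparison using de Rham currents. The cycle $c$ and the closed $2$--form $\et$ define closed currents $T_c(\be)=\int_c\be$ and $T_\et(\be)=\int_M\et\wedge\be$ on $(m-2)$--forms $\be$. The hypothesis that $[\et]$ is Poincar\'e dual to $[c]$ says exactly that $T_c$ and $T_\et$ lie in the same class of current cohomology; as $M$ is compact and oriented, the de Rham theorem for currents yields an $(m-1)$--dimensional current $S$ with $T_c-T_\et=\partial S$, that is $\int_c\be-\int_M\et\wedge\be=S(d\be)$ for every $(m-2)$--form $\be$. Were $i_Xi_Y\mu$ closed, this alone would identify $\om_c$ with $\om_\et$ up to sign; the content of the proposition is precisely that it is not.

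Finally I would evaluate this identity on $\be=i_Xi_Y\mu$. By the second identity its right-hand side equals $S(i_{[X,Y]}\mu)$, whence $\om_c(X,Y)-\om_\et(X,Y)=S(i_{[X,Y]}\mu)$ after matching signs. Setting $\ell(Z)=S(i_Z\mu)$ defines a continuous linear functional on $\X(M,\mu)$, since $Z\mapsto i_Z\mu$ is linear and $S$ is a fixed current, and the displayed relation becomes $\om_c-\om_\et=\de\ell$, as required. The one routine nuisance is sign bookkeeping: the coordinate identity for $\et(X,Y)\,\mu$ and the orientation convention implicit in Poincar\'e duality must be aligned so the two terms enter with matching sign. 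The real obstacle, already isolated above, is that $i_Xi_Y\mu$ fails to be closed; what rescues the argument is that this failure is governed by the exact current $i_{[X,Y]}\mu=d(i_Xi_Y\mu)$ attached to the bracket, and it is exactly this bracket dependence that converts the discrepancy between $\om_c$ and $\om_\et$ into a coboundary instead of forcing it to vanish.
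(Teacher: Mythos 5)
Your argument is correct and rests on the same two pillars as the paper's proof --- the identity $\et(X,Y)\mu=\et\wedge i_Yi_X\mu$, which turns $\om_\et$ into a pairing of $i_Xi_Y\mu$ against $\et$, and the identity $d(i_Xi_Y\mu)=i_{[X,Y]}\mu$ for divergence free fields, which converts the failure of $i_Xi_Y\mu$ to be closed into a coboundary --- but the technical implementation is genuinely different. The paper fixes a continuous linear projection $P=I-\b d$ of $\Om^{m-2}(M)$ onto the closed forms, introduces the intermediate cocycle $\om_{c,P}(X,Y)=\int_cPi_Xi_Y\mu$, and exhibits two explicit coboundaries $Z\mapsto\int_M\et\wedge\b i_Z\mu$ and $Z\mapsto\int_c\b i_Z\mu$ joining $\om_\et$ and $\om_c$ to it; Poincar\'e duality enters only through $\int_M\et\wedge Pi_Xi_Y\mu=\int_cPi_Xi_Y\mu$, legitimate because $Pi_Xi_Y\mu$ is closed. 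You instead encode the duality hypothesis as the statement that the closed currents $T_c$ and $T_\et$ are cohomologous and invoke the de~Rham theorem for currents to get a single primitive $S$ with $T_c-T_\et=\partial S$, so the entire discrepancy is $\ell([X,Y])$ with $\ell(Z)=S(i_Z\mu)$. The continuity inputs are parallel: your $\ell$ is continuous because $S$ is by definition a continuous functional on forms, while the paper must posit a \emph{continuous} right inverse $\b$ to $d$ on exact $(m-1)$--forms (a Hodge--theoretic fact on compact $M$). Your route is more economical --- one coboundary, no intermediate cocycle --- at the price of being less explicit; the paper's $P$ and $\b$ are reused later, e.g.\ in the universal cocycle \eqref{univ}. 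The sign issue you flag is real but shared with the source: the paper's own chain $\int_Mi_Yi_X\et\wedge\mu=\int_M\et\wedge i_Xi_Y\mu$ conceals the same orientation convention, and resolving it amounts to fixing the sign in the Poincar\'e duality isomorphism (or replacing $c$ by $-c$), which does not affect the isomorphism class of the resulting central extension.
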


\begin{proof}
Every continuous linear projection $P:\Om^{m-2}(M)\to Z^{m-2}(M)$
on the space of closed $(m-2)$--forms is of the form $P=I-\b d$ with 
$\b:B^{m-1}(M)\to\Om^{m-2}(M)$ a continuous linear  right inverse to 
the differential $d$, where $B^{m-1}(M)=d\Om^{m-2}(M)$. Then another Lie algebra 2--cocycle on ${\X}(M,\mu)$
is given by
$$
\om_{c,P}(X,Y)=\int_c Pi_Xi_Y\mu.
$$

We show that both cocycles $\om_\et$ and $\om_c$ are cohomologous to $\om_{c,P}$: 
\begin{align*}
\om_\et(X,Y)&=\int_Mi_Yi_X\et\wedge\mu=\int_M\et\wedge i_Xi_Y\mu
=\int_M\et\wedge Pi_Xi_Y\mu+\int_M\et\wedge \b di_Xi_Y\mu\\
&=\om_{c,P}(X,Y)+\int_M\et\wedge \b i_{[X,Y]}\mu
\end{align*}
and
\begin{equation*}
\om_c(X,Y)-\om_{c,P}(X,Y)=\int_c\b di_Xi_Y\mu=\int_c\b i_{[X,Y]}\mu.
\end{equation*} 
A consequence of these computations is that $\om_c$ and $\om_{c,P}$ are Lie algebra 2--cocycles on the Lie algebra of divergence free vector fields.
\end{proof}

\begin{rema}
The Lie algebra extensions by $\om_\et$, $\om_c$ and $\om_{c,P}$ of $\X(M,\mu)$ are isomorphic, 
but each of these cocycles has its advantage. 
When a corresponding central Lie group extension exists,
the first one allows to find the geodesic equation 
for the right invariant $L^2$--metric. One obtains the superconductivity equation \cite{AK} \cite{V01}.
The second cocycle describes the Lie algebra extensions corresponding to Lie group extensions 
constructed via the prequantization extension, as we will see below.
The third cocycle is just the pairing with an $(m-2)$--cycle  
$c$ on $M$ of the universal Lichnerowicz cocycle of ${\X}_{ex}(M,\mu)$ \cite{R95}:
\begin{equation}\label{univ}
\om:{\X}_{ex}(M,\mu)\x{\X}_{ex}(M,\mu)\to H^{m-2}(M,\R),\quad
\om(X,Y)=[Pi_Xi_Y\mu].
\end{equation}
Indeed, the space of exact divergence free vector fields can be identified with the space $B^{m-1}(M)$
of exact $(m-1)$--forms on $M$,
so we have the following exact sequence of vector spaces
\begin{equation}\label{rog}
0\to H^{m-2}(M,\R)\to \Om^{m-2}(M)/B^{m-2}(M)\to{\X}_{ex}(M,\mu)\to 0.
\end{equation}
The Lie algebra bracket on $\Om^{m-2}(M)/B^{m-2}(M)$:
$$
[\al+B^{m-2}(M),\be+B^{m-2}(M)]=i_{X_\al}i_{X_\be}\mu+B^{m-2}(M),
$$
where $i_{X_\al}\mu=d\al$, makes (\ref{rog})
into an exact sequence of Lie algebras.
The Lie algebra cocycle defining this central extension of ${\X}_{ex}(M,\mu)$ by $H^{m-2}(M,\R)$ can be calculated with the section 
$X_\al\mapsto \b(d\al)+B^{m-2}(M)$ of (\ref{rog}). Using the fact that $X_{\b(d\al)}=X_\al$, one gets the 2--cocycle 
\eqref{univ}.
\end{rema}


\section{Flux and period homomorphism for the Lichnerowicz cocycle}

Let $\g$ be a topological Lie algebra
and $\om$ a continuous Lie algebra 2--cocycle on $\g$.
The Lie algebra bracket on $\R\x\g$: 
$$
[(a,X),(b,Y)]=(\om(X,Y),[X,Y])
$$
defines the Lie algebra central extension $\R\x_\om\g$ of $\g$ by $\R$. 
Let $G$ be a connected Lie group with Lie algebra $\g$ and 
$\tilde G$ its universal covering group. 

There are two obstructions to the integrability of $[\om]\in H^2_c(\g)$ to a central extension of an infinite dimensional Lie group $G$, \ie to finding a Lie group extension of $G$ integrating the Lie algebra extension $\R\x_\om\g$ of $\g$. 
One obstruction involves the period group and depends on $\pi_2(G)$,
the other one involves the flux homomorphism and depends on $\pi_1(G)$ \cite{N02}.

The infinitesimal flux cocycle $f_\om:X\in\g\mapsto i_X\om\in C_c^1(\g)$
is a 1--cocycle on $\g$ with values in the $\g$--module $C_c^1(\g)$ of 
continuous linear maps from $\g$ to $\R$.
We denote by $X^r$ the right invariant vector field on $G$ defined
by $X\in\g$ and by $\om^l$ the left invariant 2--form on $G$ defined by $\om$.
The abstract flux 1--cocycle 
associated to the Lie algebra cocycle $\om$ is 
\begin{equation*}
\tilde F_\om:\tilde G\to C_c^1(\g),\quad\tilde F_\om([\ga])(X)=-\int_\ga i_{X^r}\om^l.
\end{equation*}
Here $[\ga]\in\tilde G$ denotes the homotopy class of a path $\ga$
in $G$ starting at the identity.
Another expression for the flux 1--cocycle is \cite{N04}
\begin{equation}\label{expr}
\tilde F_\om([\ga])(X)=\int_0^1\om(\ga(t)^{-1}\ga'(t),\Ad(\ga(t))^{-1}X)dt.
\end{equation}
By restricting $\tilde F_\om$ to $\pi_1(G)$ 
we get the flux homomorphism $F_\om:\pi_1(G)\to H_c ^1(\g)$,
where $H_c^1(\g)$ denotes the first continuous cohomology space of the Lie algebra $\g$.

\begin{lemm}\label{lema}
The flux homomorphism associated to the cocycle $\om_{N_0}$ is 
\begin{equation}\label{long}
F_{N_0}:\pi_1(\Diff(M,\mu))\to H_c^1(\X(M,\mu)),\quad
F_{N_0}([\ph_t])(X)=\int_{\ph_{N_0}}i_X\mu,
\end{equation}
where the $(m-1)$--cycle $\ph_{N_0}$ is
$(t,x)\in [0,1]\x N_0\mapsto\ph_t(x)\in M$.
\end{lemm}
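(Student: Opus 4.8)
The plan is to specialize the general formula \eqref{expr} to $\om=\om_{N_0}$ and $G=\Diff(M,\mu)$, evaluated on a loop $\ga\colon t\mapsto\ph_t$ based at the identity, and then to read off the restriction to $\pi_1$. The first step is to identify the two arguments of $\om_{N_0}$ in \eqref{expr} for the (formal) Lie group of volume preserving diffeomorphisms, whose product is composition, so that $\Ad(\ph)X=\ph_*X$. With this convention the left logarithmic derivative is the convective velocity $\ga(t)^{-1}\ga'(t)=T\ph_t^{-1}\o\dot\ph_t=\ph_t^*X_t$, where $X_t=\dot\ph_t\o\ph_t^{-1}\in\X(M,\mu)$ is the time dependent Eulerian velocity field of the isotopy, while $\Ad(\ph_t)^{-1}X=\ph_t^*X$. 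Hence \eqref{expr} reads
$$
\tilde F_{N_0}([\ph_t])(X)=\int_0^1\om_{N_0}(\ph_t^*X_t,\ph_t^*X)\,dt.
$$

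Next I would move the diffeomorphism $\ph_t$ out of the integrand. Since $\ph_t^*\mu=\mu$ and contraction commutes with pull--back, $i_{\ph_t^*X_t}i_{\ph_t^*X}\mu=\ph_t^*(i_{X_t}i_X\mu)$, so the change of variables $\ph_t|_{N_0}\colon N_0\to\ph_t(N_0)$ gives $\om_{N_0}(\ph_t^*X_t,\ph_t^*X)=\int_{N_0}\ph_t^*(i_{X_t}i_X\mu)=\int_{\ph_t(N_0)}i_{X_t}i_X\mu$, and therefore
$$
\tilde F_{N_0}([\ph_t])(X)=\int_0^1\Big(\int_{\ph_t(N_0)}i_{X_t}i_X\mu\Big)\,dt.
$$

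It then remains to recognize this iterated integral as the integral of $i_X\mu$ over the swept cycle. Writing $\Phi\colon[0,1]\x N_0\to M$, $\Phi(t,x)=\ph_t(x)$, for the map representing $\ph_{N_0}$, I would compute $T\Phi(\partial_t)=X_t\o\ph_t$ and $T\Phi|_{TN_0}=T\ph_t|_{TN_0}$, and split $\Phi^*(i_X\mu)=dt\wedge\al_t+(\text{terms without }dt)$ on the $(m-1)$--manifold $[0,1]\x N_0$. Evaluating on $\partial_t$ together with a frame of $TN_0$ shows $\al_t=(\ph_t|_{N_0})^*(i_{X_t}i_X\mu)$, since inserting $X_t\o\ph_t$ into the free slot of $i_X\mu$ reproduces $i_{X_t}i_X\mu$. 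Fubini then yields
$$
\int_{\ph_{N_0}}i_X\mu=\int_{[0,1]\x N_0}\Phi^*(i_X\mu)=\int_0^1\Big(\int_{N_0}\al_t\Big)\,dt=\tilde F_{N_0}([\ph_t])(X),
$$
which is the asserted formula. To finish I would note that $\ph_{N_0}$ is genuinely a cycle, $\partial\ph_{N_0}=\ph_1(N_0)-\ph_0(N_0)=0$ because $\ph_0=\ph_1=\id$, and that $X\mapsto\int_{\ph_{N_0}}i_X\mu$ is a continuous $1$--cocycle defining a class in $H_c^1(\X(M,\mu))$: for divergence free fields $i_{[X,Y]}\mu=d\,i_Xi_Y\mu$, whose integral over the cycle $\ph_{N_0}$ vanishes by Stokes. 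The main obstacle is not any single estimate but the careful bookkeeping of the conventions for the formal group $\Diff(M,\mu)$ --- the $\Ad$--action, the left logarithmic derivative, and the sign of the group bracket relative to the vector field bracket used to define $\om_{N_0}$ --- together with the justification of the change of variables and the Fubini step identifying $\al_t$.
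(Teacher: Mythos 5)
Your proposal is correct and follows essentially the same route as the paper: both start from formula \eqref{expr}, use $\Ad(\ph)X=(\ph^{-1})^*X=\ph_*X$ and the identity $\de^l\ph_t=\ph_t^*\de^r\ph_t$ together with $\ph_t^*\mu=\mu$ to rewrite the integrand as $\ph_t^*(i_{X_t}i_X\mu)$, and then recognize the iterated integral as $\int_{\ph_{N_0}}i_X\mu$. The only difference is that you spell out the final Fubini/pull-back step (and the side remarks that $\ph_{N_0}$ is a cycle and that the result lands in $H_c^1$), which the paper leaves implicit.
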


\begin{proof}
The adjoint action in $\Diff(M,\mu)_0$ is $\Ad(\ph)X=(\ph^{-1})^*X$
and the relation between the left logarithmic derivative 
$\de^l\ph_t=T\ph_t^{-1}.\frac{d}{dt}\ph_t$ and the right
logarithmic derivative is $\de^r\ph_t=\Ad(\ph_t)\de^l\ph_t=(\ph_t^{-1})^*\de^l\ph_t$. 
Hence 
\begin{align*}
F_{N_0}([\ph_t])(X)&
\stackrel{(\ref{expr})}{=}\int_0^1\om_{N_0}(\de^l\ph_t,\Ad(\ph_t^{-1})X)dt
=\int_0^1\Big(\int_{N_0}i_{\de^l\ph_t}i_{\ph_t^*X}\mu\Big) dt\\
&=\int_0^1\Big(\int_{N_0}\ph_t^*i_{\de^r\ph_t}i_X\mu\Big) dt
=\int_{\ph_{N_0}}i_X\mu
\end{align*}
is the flux homomorphism \eqref{long}.
\end{proof}

The commutator Lie algebra of $\X(M,\mu)$ is $\X_{ex}(M,\mu)$ \cite{Li74},
so the first continuous cohomology space $H_c^1(\X(M,\mu))
=H_c^1(H^{m-1}(M,\R))=H_{m-1}(M,\R)$.
Under this identification, the flux homomorphism \eqref{long} becomes
\begin{equation}\label{beco}
F_{N_0}([\ph_t])=[\ph_{N_0}]\in H_{m-1}(M,\R).
\end{equation}
Because the first continuous cohomology space of $\X_{ex}(M,\mu)$ is trivial,
the flux homomorphism $F_{N_0}:\pi_1(\Diff_{ex}(M,\mu))\to H_c^1(\X_{ex}(M,\mu))$
vanishes.

The period group $\Ga_\om$ of $\om$ is the image of the period homomorphism 
\begin{equation}\label{peri}
\per_\om:\pi_2(G)\to\R,\quad\per_\om([\si])=\int_{S^2}\si^*\om^{l},
\end{equation}
where $\si$ is a smooth representative for the homotopy class in $\pi_2(G)$.

\begin{theo}\cite{N02}\label{thm2}
Assuming that the period group $\Ga_\om$ is discrete, 
the necessary and sufficient condition for the existence of a Lie group extension of $G$ by $\R/\Ga_\om$, 
integrating the central Lie algebra extension $\R\x_\om\g$,
is the vanishing of the flux homomorphism $F_\om:\pi_1(G)\to H_c ^1(\g)$.
In particular $[\om]$ always integrates to a Lie group extension 
of $\tilde G$ by $\R/{\Ga_\om}$.
\end{theo}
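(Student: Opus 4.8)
The plan is to realise the two obstructions separately, exactly as the dichotomy $\pi_2$ versus $\pi_1$ in the discussion preceding the statement suggests. First I would integrate $\om$ over the simply connected group $\tilde G$, where the only possible obstruction is the period ($\pi_2$) one, and then I would analyse when the resulting extension descends along the covering $\tilde G\to G=\tilde G/\pi_1(G)$, where the flux ($\pi_1$) obstruction enters. The closing assertion of the theorem is then a free corollary: since $\pi_1(\tilde G)=0$, the homomorphism $F_\om$ is automatically trivial, so integrability over $\tilde G$ requires nothing beyond discreteness of $\Ga_\om$.

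For the step over $\tilde G$ I would build a group $2$--cocycle $\hat\om\colon\tilde G\x\tilde G\to Z:=\R/\Ga_\om$, where $Z$ is a $1$--dimensional Lie group because $\Ga_\om$ is assumed discrete. The cocycle condition on $\om$ is equivalent to $d\om^l=0$ for the left invariant $2$--form, so over the simply connected $\tilde G$ one may fill the boundary of a group multiplication triangle by a singular $2$--chain and set $\hat\om(g,h)$ equal to the integral of $\om^l$ over that chain, taken modulo $\Ga_\om$. Two fillings differ by a $2$--cycle, whose $\om^l$--period lies in $\Ga_\om$ by \eqref{peri}, so the $Z$--valued result is well defined, and left invariance of $\om^l$ together with $d\om^l=0$ makes $\hat\om$ a group $2$--cocycle. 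The twisted product $(g,s)(h,t)=(gh,\,s+t+\hat\om(g,h))$ then turns $\tilde G\x Z$ into a Lie group $\hat{\tilde G}$ which is a central extension of $\tilde G$ by $Z$, and differentiating $\hat\om$ at the identity recovers $\om$, so its Lie algebra is $\R\x_\om\g$.

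Next I would treat the descent to $G=\tilde G/\pi_1(G)$, where $\pi_1(G)$ sits as a discrete central subgroup. The extension $\hat{\tilde G}$ pushes down to a central $Z$--extension of $G$ precisely when $\pi_1(G)$ admits a lift to a discrete central subgroup $D\subset\hat{\tilde G}$ with $D\cap Z=\{e\}$, for then $\hat G:=\hat{\tilde G}/D$ is the desired extension of $G$ by $Z$. The obstruction is the failure of a lift to be central: conjugation by a lift of $\ga\in\pi_1(G)$ is an automorphism of $\hat{\tilde G}$ covering the identity of $\tilde G$, hence of the form $(g,z)\mapsto(g,z+b_\ga(g))$ for a Lie group homomorphism $b_\ga\colon\tilde G\to Z$, and the lift is central exactly when $b_\ga$ vanishes. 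The heart of the argument is to identify the derivative of $b_\ga$ at the identity with the flux $F_\om(\ga)$: restricting the abstract cocycle $\tilde F_\om$ to the central subgroup $\pi_1(G)$, where $\Ad$ acts trivially, makes it a homomorphism into $H_c^1(\g)$, and the expression \eqref{expr} matches it with the infinitesimal holonomy governing $b_\ga$. Since $\tilde G$ is connected and $b_\ga$ factors through its derivative, $b_\ga\equiv 0$ for all $\ga$ if and only if $F_\om=0$. This yields sufficiency; for necessity, pulling an extension of $G$ back to $\tilde G$ exhibits $\pi_1(G)$ as central in it, forcing $F_\om$ to vanish.

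The main obstacle I anticipate is analytic rather than formal: in the infinite dimensional setting one must verify that $\hat\om$ is genuinely smooth as a map into $Z$ and that the twisted product defines a Lie group in the chosen (e.g.\ convenient or Fr\'echet) category, which requires care with the regularity of the chain integrals of $\om^l$ and with the local description of $\hat{\tilde G}$ near the identity. Establishing the precise identification of the derivative of $b_\ga$ with $F_\om(\ga)$, including that the monodromy class sees nothing beyond this derivative, is the second delicate point, and it is where the hypotheses (discreteness of $\Ga_\om$ and connectedness of $\tilde G$) are used in full.
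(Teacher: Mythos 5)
The paper offers no proof of this statement: it is quoted from Neeb \cite{N02}, so there is no in‑paper argument to measure yours against. That said, your two‑step architecture --- integrate $\om$ over the simply connected group $\tilde G$, where only the period obstruction can arise, then descend along $\tilde G\to G$ and show that the flux governs whether $\pi_1(G)$ lifts to a \emph{central} discrete subgroup --- is exactly how the cited source organizes the proof, and your analysis of the descent (conjugation by a lift of $\ga$ acts by $(g,z)\mapsto(g,z+b_\ga(g))$ with $b_\ga$ a homomorphism into $\R/\Ga_\om$, vanishing iff its derivative vanishes since $\tilde G$ is connected; necessity by pulling an extension of $G$ back to $\tilde G$) is sound. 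The identification $db_\ga(e)=F_\om(\ga)$ is asserted rather than derived, but formula \eqref{expr} does make this a routine computation.

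The genuine gap sits in the first step, and it is more than the ``analytic care'' you defer. Integrating $\om^l$ over fillings of multiplication triangles produces, as written, only an \emph{abstract} group $2$--cocycle $\hat\om\colon\tilde G\x\tilde G\to\R/\Ga_\om$ (well defined because, by Hurewicz, $2$--cycles in the simply connected $\tilde G$ have $\om^l$--periods in $\Ga_\om$). To conclude that the twisted product makes $\tilde G\x Z$ a Lie group whose Lie algebra is $\R\x_\om\g$, you must produce a cocycle in this cohomology class that is \emph{smooth on an identity neighborhood}, with coherent smooth choices of paths and filling chains, and then invoke the fact that a locally smooth cocycle on a connected Lie group determines a Lie group structure on the extension --- this is precisely the nontrivial machinery of \cite{N02}, and it is also where the discreteness of $\Ga_\om$ is genuinely consumed (it is what allows $\R/\Ga_\om$ to be a Lie group and the ambiguity of the chain integral to be quotiented away). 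Equivalently, Neeb and the present paper (see \eqref{kostant} and Section 5, or the path--group construction of \cite{V08}) obtain the extension of $\tilde G$ from a principal $\R/\Ga_\om$--bundle with connection whose curvature is $\om^l$, rather than from a global chain integral. So your skeleton is the right one, but the step you flag as an ``anticipated obstacle'' is not a technicality to be checked at the end; it is the theorem's actual content, and your construction would have to be replaced by (not merely supplemented with) the locally smooth cocycle or prequantization--bundle argument to close it.
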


The period homomorphism of the Lichnerowicz cocycle $\om_\et$ is computed in \cite{N04}. 
Given a smooth representative $\si:S^2\to\Diff(M,\mu)_0$ in the homotopy class $[\si]$, to each element $x\in M$ corresponds a 
map $\si_{x}:S^2\to M$.
The integral of the closed 2--form $\et$ over each $\si_x$ provides a function $h_{\si,\et}$ on $M$ with values in the group of periods of $\et$. Now the period homomorphism \eqref{peri} is
\begin{align*}
&\per_{\om_\et}([\si])=\int_Mh_{\si,\et}\mu,
\end{align*} 
so the period group $\Ga_{\om_\et}$ is contained in the period group of the 2-form  $\et$. 

This shows that the period group $\Ga_{\om_\et}$ is discrete if the cohomology class of $\et$ is integral. 
In particular the period group of all cocycles $\om_{N_0}$ is discrete too.
Now the following result concerning the integrability of Lichnerowicz cocycles
follows from lemma \ref{lema} and theorem \ref{thm2}. 

\begin{coro}\label{coro1}
The Lie algebra 2-cocycle $\om_{N_0}$ is integrable to $\Diff_{ex}(M,\mu)$ and to the universal covering group 
$\widetilde\Diff(M,\mu)_0$.
\end{coro}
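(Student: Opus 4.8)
The plan is to apply Theorem~\ref{thm2} twice, once for each group, since both assertions fall squarely within Neeb's integrability criterion. The two hypotheses to verify are the discreteness of the period group $\Ga_{\om_{N_0}}$ and the vanishing of the flux homomorphism $F_{N_0}$, and both have in effect already been prepared in the discussion preceding the statement; the task is mainly to assemble them correctly for each of the two groups.

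First I would record the discreteness of the period group, which is needed in both cases. By the Proposition, $\om_{N_0}$ is cohomologous to the Lichnerowicz cocycle $\om_\et$ where $[\et]$ is Poincar\'e dual to $[N_0]$; since $N_0$ is an integral cycle the class $[\et]$ is integral, so the period group of the $2$--form $\et$ is discrete. As $\Ga_{\om_\et}$ is contained in the period group of $\et$, and cohomologous cocycles have the same period group, $\Ga_{\om_{N_0}}$ is discrete. For the universal covering group $\widetilde\Diff(M,\mu)_0$ the argument then finishes at once: this is precisely the final clause of Theorem~\ref{thm2}, which guarantees that any cocycle with discrete period group integrates to a central extension of the universal cover. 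Equivalently, $\pi_1$ of the universal cover is trivial, so the flux obstruction disappears automatically.

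For $\Diff_{ex}(M,\mu)$ the remaining point is the vanishing of the flux homomorphism $F_{N_0}\colon\pi_1(\Diff_{ex}(M,\mu))\to H_c^1(\X_{ex}(M,\mu))$. Here the structural input is that $\X_{ex}(M,\mu)$ is perfect, being the commutator algebra of $\X(M,\mu)$, so its first continuous cohomology space $H_c^1(\X_{ex}(M,\mu))$ is trivial. The flux homomorphism therefore takes values in the zero space and must vanish, and Theorem~\ref{thm2} again yields integrability, now to a central extension of $\Diff_{ex}(M,\mu)$ by $\R/\Ga_{\om_{N_0}}$.

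The step demanding the most care is keeping track of which Lie algebra the criterion is being applied to: for $\Diff_{ex}(M,\mu)$ one restricts $\om_{N_0}$ to the ideal $\X_{ex}(M,\mu)$, where the flux vanishes for the cohomological reason above, whereas for $\widetilde\Diff(M,\mu)_0$ one keeps the full algebra $\X(M,\mu)$ and relies on simple connectivity. The genuine mathematical content beyond this bookkeeping is the discreteness of the period group, which rests entirely on the integrality of $[\et]$ forced by $[N_0]$ being an integral class; everything else is a direct invocation of Lemma~\ref{lema} and Theorem~\ref{thm2}.
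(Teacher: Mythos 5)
Your proposal is correct and follows essentially the same route as the paper: discreteness of $\Ga_{\om_{N_0}}$ via the integrality of the Poincar\'e dual class $[\et]$ and the containment $\Ga_{\om_\et}\subset\Ga_\et$, the final clause of Theorem~\ref{thm2} for the universal cover, and the vanishing of $H_c^1(\X_{ex}(M,\mu))$ (perfectness of $\X_{ex}(M,\mu)$) to kill the flux obstruction for $\Diff_{ex}(M,\mu)$. This is precisely the argument the paper assembles from Lemma~\ref{lema}, the period computation, and Theorem~\ref{thm2}.
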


There are also other coverings of the group of volume preserving diffeomorphisms
where the cocycle $\om_{N_0}$ is integrable. 

\begin{prop}\cite{V06}\label{bar}
Let $\Pi$ be the kernel of the flux homomorphism 
$F_\om:\pi_1(G)\to H_c ^1(\g)$ and let
$\bar G=\tilde G/\Pi$ be the associated covering group of $G$.
Then the central Lie algebra extension $\R\x_\om\g$
integrates to a Lie group extension of $\bar G$ by $\R/\Ga_\om$.
Moreover the covering group $\bar G$ of $G$ is minimal with this property.
\end{prop}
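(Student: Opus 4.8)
The plan is to deduce both assertions from Theorem~\ref{thm2}, applied not to $G$ itself but to the covering group $\bar G$, exploiting that $G$, $\bar G$ and $\tilde G$ all share the single universal covering group $\tilde G$, and that the period and flux data attached to $\om$ are natural with respect to the covering maps. Since $\pi_1(G)$ lies centrally in $\tilde G$, its subgroup $\Pi=\Ker F_\om$ is a discrete central subgroup, so $\bar G=\tilde G/\Pi$ is a Lie group, the projection $\bar G\to G$ is a covering homomorphism, and $\tilde G$ is again the universal cover of $\bar G$, with $\pi_1(\bar G)\cong\Pi$.

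First I would verify that the hypotheses of Theorem~\ref{thm2} hold for $\bar G$. The period homomorphism $\per_\om$ of $\bar G$ is computed by integrating the same left invariant 2--form $\om^l$ over spheres; since a covering map induces an isomorphism $\pi_2(\bar G)\cong\pi_2(G)$ and $\om^l$ on $\bar G$ is the pull--back of $\om^l$ on $G$, the period group of $\bar G$ coincides with $\Ga_\om$, which is discrete by assumption. Next, the abstract flux 1--cocycle $\tilde F_\om$ depends only on $\g$, on $\om$ and on the common universal cover $\tilde G$, so the flux homomorphism of $\bar G$ is the restriction of $\tilde F_\om$ to $\pi_1(\bar G)=\Pi$, that is $F_\om|_\Pi$. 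By the very definition of $\Pi$ this restriction vanishes. Theorem~\ref{thm2} then produces a Lie group extension of $\bar G$ by $\R/\Ga_\om$ integrating $\R\x_\om\g$.

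For minimality I would use that every connected covering group of $G$ is of the form $G_H=\tilde G/H$ for a subgroup $H\subseteq\pi_1(G)$, with $\pi_1(G_H)\cong H$, and that a larger $H$ yields a smaller covering; thus $\bar G$ is the minimal integrating covering precisely when $\Pi$ is the largest such $H$. Suppose $\om$ integrates to a Lie group extension of some $G_H$ by $\R/\Ga_\om$. As above the period group of $G_H$ is $\Ga_\om$ (discrete) and its flux homomorphism is $F_\om|_H$; by Theorem~\ref{thm2} integrability forces $F_\om|_H=0$, hence $H\subseteq\Ker F_\om=\Pi$. Therefore every covering of $G$ on which $\om$ integrates covers $\bar G$, which is the asserted minimality.

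The main point to nail down is the naturality of the flux homomorphism and the period group under the covering maps: that for the covering $G_H$ the period group is unchanged and the flux homomorphism is exactly $F_\om|_H$. This rests on the facts that $\om^l$ and $\tilde F_\om$ are determined by $\om$ on the common universal cover $\tilde G$, that covering maps are isomorphisms on $\pi_2$ and so preserve periods, and that $\pi_1(G_H)\hookrightarrow\pi_1(G)$ realizes the flux of $G_H$ as the restriction of $F_\om$. Once this naturality is recorded, both the existence and the minimality statements are immediate consequences of Theorem~\ref{thm2}.
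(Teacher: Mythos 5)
Your proposal is correct and follows essentially the same route as the paper: apply Theorem~\ref{thm2} to $\bar G$, using that $\pi_2(\bar G)=\pi_2(G)$ preserves the period group and that the flux homomorphism of $\bar G$ is the restriction of $F_\om$ to $\pi_1(\bar G)=\Pi$, which vanishes by construction. You additionally spell out the minimality argument (integrability on $G_H=\tilde G/H$ forces $F_\om|_H=0$, hence $H\subseteq\Pi$), which the paper's two-sentence proof leaves implicit; this is a welcome completion rather than a different approach.
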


\begin{proof}
The flux homomorphism of $\om$ written for $\bar G$ vanishes, since it
is the restriction of the flux homomorphism
$F_\om$ to $\pi_1(\bar G)=\Pi=\Ker F_\om$.
Knowing that $\pi_2(\bar G)=\pi_2(\tilde G)=\pi_2(G)$, the result follows
from the previous theorem.
\end{proof} 

Knowing the expression \eqref{beco} of the flux homomorphism associated to $\om_{N_0}$,
we obtain the following corollary of proposition \ref{bar}.

\begin{coro}\label{coro2}
A compact codimension 2 submanifold $N_0$ of $M$ being given, 
we consider the subgroup $\Pi_{N_0}$ of $\pi_1(\Diff(M,\mu))$ defined by
$$
\Pi_{N_0}=\{[\ph_t]\in\pi_1(\Diff(M,\mu))|[\ph_{N_0}]=0\in
H_{m-1}(M,\R)\}.
$$
Then the minimal covering group of $\Diff(M,\mu)_0$ on which $\om_{N_0}$ 
can be integrated is 
\begin{equation}\label{bars}
\overline\Diff(M,\mu)_0=\widetilde\Diff(M,\mu)_0/\Pi_{N_0}.
\end{equation}
\end{coro}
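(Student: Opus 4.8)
The plan is to obtain this statement as a direct specialization of Proposition \ref{bar} to the group $G=\Diff(M,\mu)_0$ with Lie algebra $\g=\X(M,\mu)$, universal covering group $\tilde G=\widetilde\Diff(M,\mu)_0$, and cocycle $\om=\om_{N_0}$. Proposition \ref{bar} asserts that, provided the period group is discrete, the minimal covering of $G$ on which $\om$ integrates is $\tilde G/\Pi$, where $\Pi=\Ker F_\om$ is the kernel of the flux homomorphism on $\pi_1(G)$. Consequently the entire task reduces to identifying this kernel $\Pi$ with the subgroup $\Pi_{N_0}$ of the statement.

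First I would check that the hypothesis of Proposition \ref{bar} is met, namely that the period group $\Ga_{\om_{N_0}}$ is discrete. This was already observed in the discussion preceding Corollary \ref{coro1}: since $[N_0]$ is Poincar\'e dual to an integral cohomology class, the period group of every cocycle $\om_{N_0}$ is discrete. Hence Proposition \ref{bar} applies without further hypotheses, and in particular $\pi_2(\bar G)=\pi_2(G)$ as used in its proof.

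The key step is the computation of $\Ker F_{N_0}$. Here I would invoke the explicit formula \eqref{beco}, which under the identification $H_c^1(\X(M,\mu))=H_{m-1}(M,\R)$ reads $F_{N_0}([\ph_t])=[\ph_{N_0}]$, the homology class of the $(m-1)$--cycle $\ph_{N_0}$ swept out by $N_0$ along the loop $\ph_t$. A class $[\ph_t]\in\pi_1(\Diff(M,\mu))$ therefore lies in the kernel of $F_{N_0}$ precisely when $[\ph_{N_0}]=0$ in $H_{m-1}(M,\R)$, which is exactly the defining condition of $\Pi_{N_0}$. Thus $\Ker F_{N_0}=\Pi_{N_0}$, and substituting into Proposition \ref{bar} gives $\bar G=\widetilde\Diff(M,\mu)_0/\Pi_{N_0}=\overline\Diff(M,\mu)_0$ as the minimal covering, as claimed.

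The only point demanding care — and the closest thing to an obstacle — is to apply the identification $H_c^1(\X(M,\mu))\cong H_{m-1}(M,\R)$ from Lemma \ref{lema} and \eqref{beco} consistently, so that vanishing of the cohomology-valued flux genuinely coincides with vanishing of the homology class $[\ph_{N_0}]$. Once this identification is pinned down, the corollary is immediate and no additional computation is needed.
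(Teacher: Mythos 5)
Your proposal is correct and follows exactly the paper's route: the paper derives this corollary directly from Proposition \ref{bar} by substituting the explicit flux formula \eqref{beco}, whose kernel is precisely $\Pi_{N_0}$, with discreteness of the period group already established before Corollary \ref{coro1}. Nothing is missing.
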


  
\section{Non--linear Grassmannians}


The non--linear Grassmannian $\OGr_n(M)$ consists of
all oriented 
compact $n$--dimen\-sional submanifolds without boundary
of a smooth manifold $M$. It is a Fr\'echet manifold in a natural way, see 
\cite{KM97} Section 44.
Suppose $N\in\OGr_n(M)$. Then the tangent space of $\OGr_n(M)$ at $N$ can
naturally be identified with the space of smooth sections of the normal
bundle $TN^\perp:=(TM|_N)/TN$.

The tilda map associates to any $k$--form $\al$ on $M$ a $(k-n)$--form
$\tilde\alpha$ on $\OGr_n(M)$ by:
$$
(\tilde\alpha)_N(Y_1,\dotsc,Y_{k-n})
:=\int_Ni_{Y_{k-n}}\cdots i_{Y_1}\alpha.
$$
Here all $Y_j$ are tangent vectors at $N\in\OGr_n(M)$, \ie sections
of $TN^\perp$. 
This tilda map is related in \cite{V09} to a more general construction, called the hat map,
used to get differential forms on spaces of functions.

There is a natural action of the group $\Diff(M)$ on $\OGr_n(M)$ by
$\ph\cdot N=\ph(N)$. 
For every vector field $X\in\vf(M)$ on $M$, the fundamental vector
field $\zeta_X$ on $\OGr_n(M)$ is $\zeta_X(N)=X|_N$, 
viewed as a section of $TN^\perp$.
One can verify that
\begin{align*}
\widetilde{d\alpha}&=d\tilde\alpha\quad&
i_{\zeta_X}\tilde\alpha=\widetilde{i_X\alpha}\\
L_{\zeta_X}\tilde\alpha&=\widetilde{L_X\alpha}\quad&
\varphi^*\tilde\alpha=\widetilde{\varphi^*\alpha}.
\end{align*}
Theorem 1 in \cite{HV04} shows that if
$[\al]\in H^k(M,\Z)$, then $(\OGr_{k-2}(M),\tilde\al)$ is prequantizable,
\ie there exist a principal $S^1$--bundle
$\mathcal P\to\OGr_{k-2}(M)$ and a principal
connection 1--form $\eta\in\Omega^1(\mathcal P)$ whose curvature form
is $\tilde\alpha$.

Let $p:\widetilde\M\to\M$ denote the universal covering 
projection of the connected component $\M$ of $N_0\in\OGr_n(M)$. 
The elements in $\widetilde\M$ are homotopy classes $[N_t]$ 
of curves $t\mapsto N_t$ of $n$--dimensional submanifolds of $M$, 
starting at $N_0$.
Any closed form $\al\in\Om^{n+1}(M)$ gives rise to a smooth function
$\bar\al$ on $\widetilde\M$, uniquely defined by the conditions
$p^*\tilde\al=d\bar\al$ and $\bar\al([N_0])=0$, with $[N_0]$ denoting the
homotopy class of the constant curve $N_0$.

One can express the function $\bar\al$ by an integral.
We choose a curve $f_t$ of embeddings $N_0\hookrightarrow M$ with $f_t(N_0)=N_t$, 
and we consider an $(n+1)$--chain $c$ in $M$ given by $c:(t,x)\in I\x N_0\mapsto f_t(x)\in M$.
Then $\bar\al([N_t])=\int_c\al$ 
and this integral does not depend on the choice of the embeddings $f_t$, 
so in the sequel we will use the notation 
$$
\bar\al([N_t])=\int_{[N_t]}\al.
$$

Let $M$ be a closed $m$--dimensional manifold with integral volume form 
$\mu$. The codimension 2 non--linear Grassmannian $\OGr_{m-2}(M)$ is a prequantizable
symplectic manifold with symplectic form $\tilde\mu$ \cite{I96}.
On connected components of $\OGr_{m-2}(M)$ the group of exact volume preserving diffeomorphisms acts transitively \cite{HV04}. 
Let $\mathcal M$ be a connected component of $\OGr_{m-2}(M)$ and choose
$N_0\in\mathcal M$. The Lie group
$\Diff_{ex}(M,\mu)$ acts in a Hamiltonian way on 
$(\M,\tilde\mu)$. 
Indeed, 
the fundamental vector field $\ze_{X_\al}$ 
on ${\mathcal M}$ is Hamiltonian with 
Hamiltonian function $\tilde\al$, because $i_{\ze_{X_\al}}\tilde\mu
=i_{\ze_{X_\al}}\mu=\widetilde{i_{X_\al}\mu}=d\tilde\al$, by the tilda calculus. 
The momentum map (non--equivariant in general) is
\begin{equation*}
J:\M\to\X_{ex}(M,\mu)^*,\quad J(N)(X_\al)=\int_N\al-\int_{N_0}\al.
\end{equation*}
The pull--back of the central extension (\ref{kostant}) is a central Lie group extension of $\Diff_{ex}(M,\mu)$.
The corresponding Lie algebra 2--cocycle is $\om_{N_0}$, because
$$
(X,Y)\mapsto -\tilde\mu(\ze_X,\ze_Y)(N_0)=-\int_{N_0}i_Yi_X\mu=\om_{N_0}(X,Y).
$$

\begin{theo}\label{coad}\cite{I96}\cite{HV04}
Let $\mu$ be an integral volume form on $M$ and $N_0$ a codimension 2 submanifold of $M$. Then 
there exists a 1--dimensional central Lie group
extension of $\Diff_{ex}(M,\mu)$, with corresponding Lie algebra extension
of $\X_{ex}(M,\mu)$ defined by the 
2--cocycle $\om_{N_0}$. Moreover $(\mathcal M,\tilde\mu)$ 
is a prequantizable coadjoint orbit of this extension,
with Kirillov--Kostant--Souriau symplectic form.
\end{theo}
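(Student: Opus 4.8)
The plan is to derive both assertions from the pull--back construction of the prequantization extension \eqref{kostant}, most of whose ingredients have already been assembled just before the statement.

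First I would settle the existence of the extension. Since $\mu$ is integral, Theorem~1 in \cite{HV04} furnishes a principal $S^1$--bundle $\mathcal P\to\M$ with connection 1--form $\et$ of curvature $\tilde\mu$, so that $(\M,\tilde\mu)$ is prequantizable and the extension \eqref{kostant} is available. The Hamiltonian action of $\Diff_{ex}(M,\mu)$ on $(\M,\tilde\mu)$ lets us pull \eqref{kostant} back, yielding a 1--dimensional central Lie group extension of $\Diff_{ex}(M,\mu)$. By the general rule $(X,Y)\mapsto-\tilde\mu(\ze_X,\ze_Y)(N_0)$ its Lie algebra cocycle equals $\om_{N_0}$, exactly as computed above. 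This disposes of the first half of the statement.

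For the coadjoint orbit assertion I would use the non--equivariant momentum map $J\colon\M\to\X_{ex}(M,\mu)^*$, $J(N)(X_\al)=\int_N\al-\int_{N_0}\al$, whose failure to be equivariant is governed by $\om_{N_0}$; this is the infinitesimal content of $-\tilde\mu(\ze_X,\ze_Y)(N_0)=\om_{N_0}(X,Y)$. Passing to $\hat\g=\R\x_{\om_{N_0}}\X_{ex}(M,\mu)$ and its group $\hat G$, the affine $G$--action on $\X_{ex}(M,\mu)^*$ carrying $J$ is precisely the restriction to the hyperplane $\{z=1\}\subset\hat\g^*$ of the genuine coadjoint action of $\hat G$. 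Hence the lift $\hat J\colon\M\to\hat\g^*$, $\hat J(N)=(1,J(N))$, is equivariant.

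Since $\Diff_{ex}(M,\mu)$ acts transitively on the component $\M$ \cite{HV04}, the image of the equivariant map $\hat J$ is a single coadjoint orbit $\ori$ of $\hat G$, onto which $\hat J$ maps $\M$ surjectively. A standard momentum--map computation then yields $\hat J^*\Om_{KKS}=\tilde\mu$: fundamental vector fields $\ze_X$ span $T\M$ by transitivity, equivariance identifies $\hat J_*\ze_X$ with the coadjoint vector field $\operatorname{ad}^*_X\hat J$ (up to sign), so $\Om_{KKS}(\hat J_*\ze_X,\hat J_*\ze_Y)=\langle\hat J,[X,Y]_{\hat\g}\rangle$, and the Hamiltonian identity $i_{\ze_X}\tilde\mu=d\langle J,X\rangle$ together with the central term of $[\,\cdot\,,\,\cdot\,]_{\hat\g}$ reproduces $\tilde\mu(\ze_X,\ze_Y)$. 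The bundle $\mathcal P\to\M$ with curvature $\tilde\mu$ then exhibits $\ori\cong(\M,\tilde\mu)$ as a prequantizable coadjoint orbit.

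The main obstacle is the infinite--dimensional bookkeeping rather than any single formula. One must verify that $\ori$ is an honest Fr\'echet coadjoint orbit, that the integrals $\int_N\al$ separate the submanifolds in $\M$ so that $\hat J$ is a diffeomorphism and not merely an equivariant surjection, and that the coadjoint action of $\hat G$ and the maps $J,\hat J$ are smooth in the convenient calculus of \cite{KM97}. These smoothness and injectivity points, automatic in finite dimensions, are where the genuine care is required.
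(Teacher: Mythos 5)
Your proposal follows essentially the same route as the paper: the first half is exactly the pull--back of the prequantization extension \eqref{kostant} with the cocycle computation $-\tilde\mu(\ze_X,\ze_Y)(N_0)=\om_{N_0}(X,Y)$ already displayed before the statement, and the second half unfolds what the paper delegates to Proposition~1 of \cite{HV04} (a transitive Hamiltonian action with injective momentum map identifies the symplectic manifold with a coadjoint orbit of the 1--dimensional central extension, with Kirillov--Kostant--Souriau form). The one point you flag but do not actually carry out --- that $\hat J$ is injective rather than merely an equivariant surjection onto the orbit --- is not mere bookkeeping: it is the hypothesis that makes the identification $\M\cong\ori$ work, and it has a two--line proof that the paper does supply in the analogous situation for $\bar J$: if $J(N)=J(N')$ then $\int_N\al=\int_{N'}\al$ for every $(m-2)$--form $\al$ (each such $\al$ with $i_X\mu=d\al$ gives a divergence free $X$), and pairing with all such $\al$ separates points of $\OGr_{m-2}(M)$, so $N=N'$. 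With that short argument inserted, your proof is complete and matches the paper's.
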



\section{Geometric constructions of central Lie group extensions}

The existence of the central Lie group extension of the group of exact volume preserving diffeomorphisms, 
constructed in theorem \ref{coad} via the prequatization extension, 
was already shown in corollary \ref{coro1}.
In this section we present geometric constructions 
of the other Lie group extensions appearing in corollaries \ref{coro1} and \ref{coro2},
namely extensions of coverings of the group of volume preserving diffeomorphisms.

The natural action of $\Diffvol_0$ on the connected component $\mathcal M$ of $\OGr_{m-2}(M)$ 
is not Hamiltonian. By passing 
to universal covering spaces we obtain a Hamiltonian action.
Let $p:\widetilde{\mathcal M}\to\mathcal M$ denote the universal covering space.
The lifted symplectic action of $\widetilde\Diff(M,\mu)_0$ on $\widetilde\M$ is transitive and Hamiltonian. 
The momentum map is
$$
\tilde J:\widetilde{\mathcal M}\to\X(M,\mu)^*,\quad
\tilde J([N_t])(X)=\int_{[N_t]}i_X\mu,
$$
because for $X\in\X(M,\mu)$, the fundamental vector field $\tilde\ze_X$ on $\widetilde{\mathcal M}$ is Hamiltonian with 
Hamiltonian function $\overline{i_X\mu}$ (defined in Section 4): $i_{\tilde\ze_X}p^*\tilde\mu
=p^*\widetilde{i_X\mu}=d(\overline{i_X\mu})$.
It is non--equivariant in general. 

\begin{prop}\label{p4}
The pull--back of the prequantization central extension (\ref{kostant}) associated to 
the prequantizable symplectic manifold $(\widetilde\M,p^*\tilde\mu)$, by the canonical Hamiltonian action of
$\widetilde\Diff(M,\mu)_0$, is a central Lie group extension 
integrating the Lie algebra 2--cocycle $\om_{N_0}$.
\end{prop}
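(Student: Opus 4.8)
The plan is to apply the general pull--back construction recalled in the introduction, established in full generality (including the infinite--dimensional case) in \cite{NV03}. That construction takes a Lie group $G$ with Lie algebra $\g$, a connected prequantizable symplectic manifold, and a Hamiltonian $G$--action, and produces a $1$--dimensional central Lie group extension of $G$ by pulling back the prequantization extension \eqref{kostant}; the resulting Lie algebra extension is given by the cocycle $(X,Y)\mapsto-\Om(\ze_X,\ze_Y)(x_0)$ for a chosen base point $x_0$ and fundamental vector fields $\ze_X$. I apply this with $G=\widetilde\Diff(M,\mu)_0$, $\g=\X(M,\mu)$, the symplectic manifold $(\widetilde\M,p^*\tilde\mu)$, and the canonical Hamiltonian action of $\widetilde\Diff(M,\mu)_0$ whose momentum map $\tilde J$ and Hamiltonian functions $\overline{i_X\mu}$ were exhibited just above.

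First I would check prequantizability of $(\widetilde\M,p^*\tilde\mu)$. Since $\mu$ is integral, $(\M,\tilde\mu)$ is prequantizable, \ie there is a principal $S^1$--bundle $\mathcal P\to\M$ with connection $1$--form $\eta$ of curvature $\tilde\mu$ (Theorem 1 of \cite{HV04}). Pulling this bundle back along the covering projection $p\colon\widetilde\M\to\M$ yields a principal $S^1$--bundle $p^*\mathcal P\to\widetilde\M$ with connection $p^*\eta$ whose curvature is $p^*\tilde\mu$, so $(\widetilde\M,p^*\tilde\mu)$ is prequantizable. As the Hamiltonian character of the lifted action has already been verified, the hypotheses of the general construction hold, and the pull--back of the associated prequantization extension $\Aut(p^*\mathcal P,p^*\eta)\to\Ham(\widetilde\M,p^*\tilde\mu)$ along the homomorphism $\widetilde\Diff(M,\mu)_0\to\Ham(\widetilde\M,p^*\tilde\mu)$ is a central Lie group extension of $\widetilde\Diff(M,\mu)_0$.

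It then remains to identify the resulting Lie algebra cocycle. Choosing the base point $x_0=[N_0]\in\widetilde\M$, the homotopy class of the constant curve at $N_0$, the general formula gives $(X,Y)\mapsto-(p^*\tilde\mu)(\tilde\ze_X,\tilde\ze_Y)([N_0])$. Because $p$ is a local diffeomorphism carrying $[N_0]$ to $N_0$ and the lifted fundamental field $\tilde\ze_X$ to $\ze_X$, this equals
$$
-\tilde\mu(\ze_X,\ze_Y)(N_0)=-\int_{N_0}i_Yi_X\mu=\om_{N_0}(X,Y),
$$
which is exactly the computation recorded before theorem \ref{coad}. Hence the cocycle is $\om_{N_0}$, as claimed.

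The one genuinely delicate point is that $\widetilde\M$ is infinite--dimensional, so neither \eqref{kostant} nor its pull--back can be treated by the finite--dimensional result of \cite{RS}; I expect this to be the main obstacle, resolved by invoking the infinite--dimensional version of the construction in \cite{NV03}, which guarantees that the pull--back is a genuine Lie group extension. Everything else is routine, since prequantizability passes to the universal cover via the pulled--back bundle $p^*\mathcal P$, and the cocycle identification reduces to the same local computation as in the exact case.
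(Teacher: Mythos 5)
Your proposal is correct and follows essentially the same route as the paper: invoke the infinite--dimensional pull--back construction of \cite{NV03} to get a genuine Lie group extension, then identify the cocycle by the pointwise computation $-p^*\tilde\mu(\tilde\ze_X,\tilde\ze_Y)([N_0])=-\tilde\mu(\ze_X,\ze_Y)(N_0)=\om_{N_0}(X,Y)$. Your explicit verification that prequantizability passes to $\widetilde\M$ via the pulled--back bundle $p^*\mathcal P$ is a point the paper leaves implicit, but it is not a different argument.
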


\begin{proof}
The fact that the pull--back is indeed a Lie group, even for infinite dimensional $\M$, follows from Theorem 3.4 in \cite{NV03}.
The Lie algebra cocycle is $\om_{N_0}$ because
$$
-p^*\tilde\mu(\tilde\ze_X,\tilde\ze_Y)([N_0])=
 -\tilde\mu(\ze_X,\ze_Y)(N_0)=-\int_{N_0}i_Yi_X\mu=\om_{N_0}(X,Y),
$$
for all $X,Y\in\X(M,\mu)$.
\end{proof}

A geometric construction of the central extension 
of $\overline\Diff(M,\mu)_0$
can be obtained using the covering space $q:\bar\M\to\M$ defined by
$\bar\M=\widetilde\M/\Pi_\M$ for
$\Pi_\M$ the kernel of the canonical projection
$\pi_1(\mathcal M)\to H_{m-1}(M,\R)$, which
associates to a homotopy class of a loop of $(m-2)$--dimensional submanifolds
the corresponding $(m-1)$--cycle on $M$. This means that $[N_t]\in\Pi_\M$ if and only if $\int_{[N_t]}\be=0$ for all closed $(m-1)$--forms $\be$ on $M$.

\begin{lemm}
The groups $\widetilde\Diff(M,\mu)_0$ and $\overline\Diff(M,\mu)_0$ 
act on $(\bar\M,q^*\tilde\mu)$ in a Hamiltonian way, with momentum map
\begin{equation}\label{muba}
\bar J:\bar\M\to\X(M,\mu)^*,\quad \bar J([N_t])(X)=\int_{[N_t]}i_X\mu.
\end{equation}
\end{lemm}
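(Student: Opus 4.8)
The plan is to verify that $\bar J$ is well defined on $\bar\M$ and then show it is the momentum map for both group actions. First I would check that the formula $\bar J([N_t])(X)=\int_{[N_t]}i_X\mu$ descends from $\widetilde\M$ to the quotient $\bar\M=\widetilde\M/\Pi_\M$. The expression $\int_{[N_t]}i_X\mu=\overline{i_X\mu}([N_t])$ is precisely the function $\bar\al$ from Section~4 applied to the closed $(m-1)$--form $\al=i_X\mu$ (closed because $X$ is divergence free). By the very definition of $\Pi_\M$, two homotopy classes differing by an element of $\Pi_\M$ give the same value of $\int_{[N_t]}\be$ for every closed $(m-1)$--form $\be$, so in particular for $\be=i_X\mu$. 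Hence $\bar J$ is well defined on $\bar\M$, and it equals $\tilde J$ composed with the projection $\widetilde\M\to\bar\M$.

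Next I would establish the Hamiltonian property. Since $\bar\M=\widetilde\M/\Pi_\M$ is an intermediate quotient between $\widetilde\M$ and $\M$, write $q:\bar\M\to\M$ and let $r:\widetilde\M\to\bar\M$ be the covering with $p=q\circ r$. The symplectic form is $q^*\tilde\mu$, and $r^*q^*\tilde\mu=p^*\tilde\mu$. The fundamental vector field $\bar\ze_X$ on $\bar\M$ lifts to $\tilde\ze_X$ on $\widetilde\M$ and projects to $\ze_X$ on $\M$. The key computation, already available from the discussion preceding Proposition~\ref{p4}, is that $i_{\tilde\ze_X}p^*\tilde\mu=d(\overline{i_X\mu})$. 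Since $\overline{i_X\mu}$ descends to the function $\bar J(\cdot)(X)$ on $\bar\M$, and the covering $r$ is a local symplectomorphism, one obtains $i_{\bar\ze_X}q^*\tilde\mu=d(\bar J(\cdot)(X))$ on $\bar\M$. This shows $\bar\ze_X$ is Hamiltonian with Hamiltonian function $\bar J(\cdot)(X)$, which is exactly the assertion that $\bar J$ is a momentum map.

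Finally I would address the two groups separately. For $\widetilde\Diff(M,\mu)_0$ the action on $\bar\M$ is the one induced from its action on $\widetilde\M$ through the quotient by $\Pi_\M$; this requires checking that $\widetilde\Diff(M,\mu)_0$ preserves $\Pi_\M$, which follows because the deck transformations commute with the group action and $\Pi_\M$ is characterized by a condition on homology classes that the diffeomorphism action respects. For $\overline\Diff(M,\mu)_0=\widetilde\Diff(M,\mu)_0/\Pi_{N_0}$ one must verify that $\Pi_{N_0}$ acts trivially on $\bar\M$, so that the action factors through $\overline\Diff(M,\mu)_0$; this is the main obstacle and the step needing the most care. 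The point is to match the two quotient constructions: $\Pi_{N_0}\subset\pi_1(\Diff(M,\mu))$ is defined by $[\ph_{N_0}]=0$ in $H_{m-1}(M,\R)$, while $\Pi_\M\subset\pi_1(\M)$ is the kernel of $\pi_1(\M)\to H_{m-1}(M,\R)$. The evaluation map $\Diff(M,\mu)_0\to\M$, $\ph\mapsto\ph(N_0)$, induces $\pi_1(\Diff(M,\mu))\to\pi_1(\M)$ sending $[\ph_t]$ to $[\ph_t(N_0)]$, and under this map the cycle $\ph_{N_0}$ and the cycle associated to the loop $\ph_t(N_0)$ represent the same class in $H_{m-1}(M,\R)$. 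Consequently $\Pi_{N_0}$ maps into $\Pi_\M$, so its image in the deck group of $\bar\M$ is trivial and the action descends to $\overline\Diff(M,\mu)_0$. The Hamiltonian property then follows for this group as well, since the momentum map, Hamiltonian functions, and fundamental vector fields are all inherited through the quotient.
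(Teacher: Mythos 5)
Your proof is correct and follows essentially the same route as the paper: well--definedness of the action and of $\bar J$ via the characterization of $\Pi_\M$ by integrals of closed $(m-1)$--forms, triviality of the $\Pi_{N_0}$--action via identifying the loop $t\mapsto\ph_t(N_0)$ in $\M$ with the cycle $\ph_{N_0}$, and the Hamiltonian property via the tilda calculus on a covering of $\M$. The only cosmetic difference is that you package the descent of the action in covering--space language (a lift of the identity is a deck transformation determined by the image of the basepoint), whereas the paper verifies the explicit integral identities $\int_{[\ph_t(N_t)]}\be=\int_{[N_t]}\be+\int_{[\ph_t(N_1)]}\be$ and $\int_{[\ph_t(N_1)]}\be=\int_{[\ph_t(N_0)]}\be$ directly.
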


\begin{proof}
The group $\widetilde\Diff(M,\mu)_0$ acts on 
$\bar\M$ because for any two representing paths
$N_t$ and $N'_t$ of the same element $[N_t]=[N'_t]\in\bar\M$ and for any 
$[\ph_t]\in\widetilde\Diff(M,\mu)_0$, the paths $\ph_t(N_t)$ and $\ph_t(N'_t)$
represent the same element in $\bar\M$.
Indeed, $N_1=N'_1$ and for any closed $(m-1)$--form $\be$ on $M$
\begin{align*}
\int_{[\ph_t(N_t)]}\be=\int_{[N_t]}\be
+\int_{[\ph_t(N_1)]}\be
=\int_{[N'_t]}\be+\int_{[\ph_t(N'_1)]}\be
=\int_{[\ph_t(N'_t)]}\be.
\end{align*}
The action of $\Pi_{\M}\subset \widetilde\Diff(M,\mu)_0$ on $\bar\M$ is trivial.
Indeed, let $[\ph_t]\in\Pi_{\M}$ and $[N_t]\in\bar\M$. Then
for any closed $(m-1)$--form $\be$ on $M$
$$
\int_{[\ph_t(N_t)]}\be-\int_{[N_t]}\be
=\int_{[\ph_t(N_1)]}\be
=\int_{[\ph_t(N_0)]}\be=0,
$$
so $[\ph_t(N_t)]=[N_t]\in\bar\M$.
Finally the $\widetilde\Diff(M,\mu)_0$--action descends to a
$\overline\Diff(M,\mu)_0$--action on $\bar\M$.

Given $X\in\X(M,\mu)$, the fundamental vector field $\bar\ze_X$ on
$\bar\M$ satisfies $Tq.\bar\ze_X=\ze_X$, so
the action is Hamiltonian:
$$
i_{\bar\ze_X}q^*\tilde\mu=q^*i_{\ze_X}\tilde\mu
=q^*\widetilde{i_X\mu}=d(\overline{i_X\mu}),
$$
with Hamiltonian function $\overline{i_X\mu}:[N_t]\mapsto\int_{[N_t]}i_X\mu$, a well defined function on $\bar\M$.
Hence the momentum map is $\bar J([N_t])(X)=\int_{[N_t]}i_X\mu$.
\end{proof}

Now the central extension of $\overline\Diff(M,\mu)_0$, the minimal covering group \eqref{bars} of $\Diff(M,\mu)_0$ on which the Lie algebra cocycle $\om_{N_0}$ can be integrated, can be realized geometrically
with the help of the prequantizable symplectic manifold $\bar\M$.

\begin{prop}
By pulling back the prequantization central extension for $(\bar\M,q^*\tilde\mu)$ by the Hamiltonian $\overline\Diff(M,\mu)_0$--action,
one obtains the central Lie group extension of 
$\overline\Diff(M,\mu)_0$ integrating the cocycle $\om_{N_0}$.
The symplectic manifold $(\bar\M,q^*\tilde\mu)$ 
can be realized as a coadjoint orbit with Kostant-Kirillov-Souriau symplectic form of this central extension 
of $\overline\Diff(M,\om)$, as well as of the central extension of $\widetilde\Diff(M,\om)$ integrating $\om_{N_0}$.
\end{prop}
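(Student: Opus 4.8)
The plan is to establish the two assertions in turn, following the templates already used in Proposition~\ref{p4} for the universal cover and in Theorem~\ref{coad} for $(\M,\tilde\mu)$.

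For the first assertion I would proceed exactly as in the proof of Proposition~\ref{p4}. The preceding lemma supplies the Hamiltonian action of $\overline\Diff(M,\mu)_0$ on $(\bar\M,q^*\tilde\mu)$ together with its momentum map $\bar J$, so Theorem~3.4 in~\cite{NV03} ensures that the pull--back of the prequantization extension~\eqref{kostant} along this action is again a Lie group extension, even though $\bar\M$ is infinite dimensional. It then remains only to identify the resulting Lie algebra cocycle. Using $Tq.\bar\ze_X=\ze_X$ and the base point $[N_0]\in\bar\M$, one computes
$$
-q^*\tilde\mu(\bar\ze_X,\bar\ze_Y)([N_0])=-\tilde\mu(\ze_X,\ze_Y)(N_0)=-\int_{N_0}i_Yi_X\mu=\om_{N_0}(X,Y)
$$
for all $X,Y\in\X(M,\mu)$, so the pull--back extension integrates $\om_{N_0}$.

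For the second assertion the point is that the momentum map $\bar J$ of~\eqref{muba} realizes $\bar\M$ as a coadjoint orbit of the central extension. First I would record that the action is transitive: $\widetilde\Diff(M,\mu)_0$ acts transitively on $\widetilde\M$, and since $\bar\M$ is a quotient of $\widetilde\M$, the induced actions of both $\widetilde\Diff(M,\mu)_0$ and $\overline\Diff(M,\mu)_0$ on $\bar\M$ are transitive as well. The momentum map $\bar J$ is non--equivariant, and its non--equivariance cocycle is exactly $\om_{N_0}$, by the computation displayed above; this is the standard relation between the failure of equivariance for a transitive Hamiltonian action and the value of $-q^*\tilde\mu(\bar\ze_X,\bar\ze_Y)$ at the base point. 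Passing to the central Lie algebra extension $\R\x_{\om_{N_0}}\X(M,\mu)$ turns this cocycle into a coboundary, so the correspondingly lifted momentum map $\bar\M\to(\R\x_{\om_{N_0}}\X(M,\mu))^*$ becomes equivariant for the coadjoint action of the central extension, and transitivity forces it to be a diffeomorphism onto a single coadjoint orbit. The defining relation of the momentum map then identifies the pull--back of the Kostant--Kirillov--Souriau form with $q^*\tilde\mu$. Since the central extensions of $\widetilde\Diff(M,\mu)_0$ and of $\overline\Diff(M,\mu)_0$ share the same Lie algebra $\R\x_{\om_{N_0}}\X(M,\mu)$ and both groups act transitively through the formula~\eqref{muba}, the identical argument realizes $\bar\M$ as a coadjoint orbit of each of the two extensions.

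The step I expect to be the main obstacle is verifying that the lifted momentum map is genuinely injective on $\bar\M$, so that $\bar\M$ is the whole coadjoint orbit rather than merely a covering of it. This is precisely what the definition $\bar\M=\widetilde\M/\Pi_\M$ with $\Pi_\M=\Ker(\pi_1(\M)\to H_{m-1}(M,\R))$ is designed to guarantee. Indeed, the endpoint $N_1$ of a class $[N_t]$ is already detected by the values $\bar J([N_t])(X_\al)=\int_{N_1}\al-\int_{N_0}\al$ on exact divergence free fields, as in Theorem~\ref{coad}; and once the endpoint is fixed, two classes differ by a loop whose image under $\bar J$ is the pairing of its $(m-1)$--cycle with $[i_X\mu]$. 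Since $s_\mu$ surjects onto $H^{m-1}(M,\R)$, this pairing vanishes for all $X\in\X(M,\mu)$ exactly when the cycle is zero in $H_{m-1}(M,\R)$, \ie exactly when the loop lies in $\Pi_\M$. Thus $\bar J$ separates the points of $\bar\M$, and the coadjoint orbit identification then follows from general momentum map theory as in~\cite{HV04}.
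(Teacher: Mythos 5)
Your proposal is correct and follows essentially the same route as the paper's proof: cite Theorem 3.4 of \cite{NV03} for the pull--back being a Lie group extension, evaluate $-q^*\tilde\mu(\bar\ze_X,\bar\ze_Y)$ at the base point to identify the cocycle as $\om_{N_0}$, then establish transitivity and injectivity of $\bar J$ (detecting the endpoint via $(m-2)$--forms $\al$ with $i_{X_\al}\mu=d\al$, and the residual loop via its class in $H_{m-1}(M,\R)$) and invoke Proposition 1 of \cite{HV04}. Your version merely spells out in more detail the mechanism behind that cited proposition and the injectivity step, both of which the paper states more tersely.
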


\begin{proof}
Let $q:\bar\M\to\M$ be the covering defined in the previous section.
Using lemma 1 and proposition 3.4 in \cite{NV03}, and
observing that 
$$
-q^*\tilde\mu(\bar\ze_X,\bar\ze_Y)=\om_{N_0}(X,Y),
$$ 
we get the first part of the proposition.

The actions of $\widetilde\Diff(M,\mu)_0$ and $\overline\Diff(M,\mu)_0$
on $\bar\M$ are Hamiltonian. These are also transitive actions, since 
the $\Diff_{ex}(M,\mu)$--action on $\M$  is transitive \cite{HV04}.
The momentum map is injective because
if $\bar J([N_t])=\bar J([N'_t])$, then $\int_{N_1}\al=\int_{N'_1}\al$
for any $(m-2)$--form $\al$ on $M$ (since $i_X\mu=d\al$ defines
a divergence free vector field $X$). It follows that $N_1=N'_1$ and, since
$\int_{[N_t]}i_X\mu=\int_{[N'_t]}i_X\mu$,
in $\bar\M$ the classes of $[N_t]$ and $[N'_t]$ coincide.

Knowing from proposition 1 in \cite{HV04} that a transitive Hamiltonian $G$--action on 
a symplectic manifold $\M$, with injective momentum map, provides an identification of the symplectic manifold 
with a coadjoint orbit of a 1--dimensional central extension of $G$ with Kostant-Kirillov-Souriau symplectic form, we get the result.
\end{proof}


\end{document}